\newtheorem{theorem}{Theorem}[section]
\newtheorem{corollary}[theorem]{Corollary}
\newtheorem{proposition}[theorem]{Proposition}
\newtheorem{lemma}[theorem]{Lemma}
\newtheorem*{remark}{Remark}\newcommand{\beq}{\begin{equation}}
\newcommand{\eeq}{\end{equation}}
\newcommand{\mbfu}{\mathbf{u}}
\newcommand{\mbfx}{\mathbf{x}}
\newcommand{\mbfe}{\mathbf{e}}
\newcommand{\C}{\mathbb{C}}
\newcommand{\Z}{\mathbb Z}
\newcommand{\R}{\mathbb{R}}
\def\cP{{\mathcal P}}
\begin{document}

 \title[Discrete Evolutions]{ Sharp Uniqueness Results for Discrete Evolutions}
\author{Yurii Lyubarskii } 
\address{Department of Mathematical Sciences, Norwegian University of Science  and Technology, Trondheim, 7491, Norway}
\email{yura@math.ntnu.nu}
\author{ Eugenia Malinnikova}
\address{Department of Mathematical Sciences, Norwegian University of Science and Technology, Trondheim, 7491, Norway}
\email{eugenia@math.ntnu.nu}
{\it To Helge Holden on the occasion of his 60th birthday}

\subjclass[2010] { Primary 35Q41, 47B36; Secondary 39A12, 33C45}.
 \keywords
{Discrete evolutions, Sch\"odinger equation, Jacobi matrices}
 
\maketitle
\begin{abstract}
We prove sharp uniqueness results for a wide class of one-dimensional discrete evolutions. The proof is based on a construction from  the theory of  complex Jacobi matrices  combined with growth estimates of entire functions.
\end{abstract}


\section{Introduction}
\let\thefootnote\relax\footnote{The research was supported by Grant 213638 of the Research Council of Norway}
We study solutions of discrete  evolution equations of the form
\begin{equation}
\label{eq:ev}
\partial_t \mbfu=A\mbfu,
\end{equation}
where  
  $\mbfu:[0,T]\to l^2(X)$  for some Hilbert space $X$, $\mbfu=\{u_k\}_k,\ u_k:[0,T]\to X$, and  $A$ is a bounded operator on $l^2(X)$ of a special form. Namely, we assume that the matrix of $A$
 (its elements are operators in $X$) is banded, i.e. contains just a finite number of non-zero diagonals. 
  
  We are looking for uniqueness result of the following type:
  
  \medskip
  
  {\em If a solution $\mbfu= \{u_k\}_k$  of \eqref{eq:ev} decays sufficiently fast in spatial variable $k$ at two moments of time $t=0,T$,  then   $\mbfu\equiv 0$.  }
  
  \medskip

  The model example of such evolution is the discrete Schr{\"o}ding\-er equation 
	$\partial_t \mbfu= -i(\Delta_d + V)\mbfu$ 
	on the standard lattice $\Z^d$. For 
 this case  we set $X=l^2(\Z^{d-1})$, i.e. the space $l^2(\Z^d)$ is considered as $l^2(l^2(\Z^{d-1}))$ and the discrete Laplace operator  on $d$-dimensional lattice, $\delta_d:l^2(\Z^d)\rightarrow l^2(\Z^d)$ is defined inductively,
\begin{eqnarray}\label{eq:dL}
(\Delta_1\mbfu )_k&=&u_{k+1}+u_{k-1}-2u_k\quad {\text{for}}\ \mbfu=\{u_k\}\in l^2(\Z)\quad {\text{and}}\\ \nonumber
(\Delta_d\mbfu)_k&=&u_{k+1}+u_{k-1}-2u_k+\Delta_{d-1}u_k \quad {\text{for}}\ \mbfu=\{u_k\}\in l^2(l^2(\Z^{d-1})).
\end{eqnarray} 
Further, the potential part is  $(V\mbfu)_k=V_ku_k$, $V=\{V_k\}$,  where $V_k:l^2(\Z^{d-1})\to l^2(\Z^{d-1})$ are diagonal operators for $k\in\Z$. The uniqueness problem for this evolution has been considered in \cite{JLMP, FB1, FB, FBV, AT}.

Our  research is motivated by  a remarkable series of papers
  \cite{CEKPV, EKPV1, EKPV2} (see also references therein) which studied the continuous case. In these articles   a sharp uniqueness statement   is obtained  for solutions of
   Schr\"odinger equations with time-dependent  potentials, the result is applicable to  some 
   non-linear equations. For the potential-free Schr\"odinger evolution the uniqueness statement can be considered as a version of the classical Hardy uncertainty principle. 

 The Fourier transform applied to both the discrete and continuous Schr\"o\-din\-ger evolutions transforms the uniqueness questions into those on growth of analytic functions. In \cite{JLMP} and \cite{FB1}   the theory of entire functions has been applied to  the model case of free  discrete evolution ($A=-i\Delta_d$).
  It was proved that in dimension $d=1$ the inequality
\[
|u_n(0)|+|u_n(1)|<\frac{1}{\sqrt{|n|}}\left(\frac{e}{2|n|}\right)^{|n|},\quad n\in\Z\setminus\{0\},
\]
implies $u_n(t)=Ai^{-n} e^{-2it}J_n(1 - 2t)$, where $J_n$ is the Bessel function. In particular a solution to the free Schro\"odinger evolution equation cannot decay faster than $J_n(1)$ simultaneously at $t=0$ and $t=1$. This result was also generalized to special classes of time-independent potentials, first those with compact supports \cite{JLMP} and then fast decaying  \cite{AT}. General bounded potentials were considered in \cite{JLMP} (in dimension $d=1$) and \cite{FBV} (in arbitrary dimension).  For time-dependent potentials the  uniqueness results obtained in \cite{JLMP, FBV} show that the inequality
\[|u(t,k)|\le C\exp(-\gamma|k|\log|k|)\] 
for some fixed $\gamma>\gamma_0$ implies $u\equiv 0$, however these results are not sharp.

In this note we  combine the entire function techniques developed in  \cite{JLMP} with some ideas  from the theory of complex  Jacobi matrices in order to consider  general discrete models with time-independent banded operator $A$. Thus we cover for example one-dimensional heat and Schr\"odinger evolutions with bounded potentials as well as some discrete versions of higher order  one-dimensional operators and also some higher dimensional operators (with very specific potentials).

The article is organized as follows. The next section contains preliminaries related to banded operators and generalized eigenvectors. We also consider some model examples of operator $A$ where the problem \eqref{eq:ev}
admits explicit solution.  In section 3  we apply the theory of entire functions to show that any solution to  general time-independent evolution which  decays sufficiently fast at two times is orthogonal to all generalized eigenvectors of the adjoint operator $A^*$, this argument holds for general banded operators on $l^2(X)$.  
For the case of a selfadjoint operator $A$ and $X=\C$ one can apply general results on completeness of the set of generalized eigenvectors in order to see that  this orthogonality implies that the solution is trivial. At the end of section 3 the multidimensional selfadjoint case, i.e. when $A=A^*$ and $X=l^2(Z^{d-1})$, is also considered. We demand additional decay of solution in complimentary spatial variables. This decay is needed to include the space $l^2(\Z^d)$ in a Gelfand triple and apply a general result on the completeness of the set of generalized eigenvectors. The more complicated non-selfadjoint case is  presented in Section 4.  The construction is inspired by a version of Shohat--Favard theorem for complex Jacobi matrices. We consider first the case $X=\C$ in order to show the main ideas without further technical details. For general $X$ we need an additional assumption. Namely we assume that the matrix entries of the operator $A$ commute with each other. We don't know if this assumption is necessary. 
 In Section 5 we consider a 
closely related question on decay of the solutions of the discrete stationary equation.

\subsection*{Acknowledgment} This work has been done while the authors were visiting Department of Mathematics at Purdue University. It is our pleasure to thank the department for hospitality. We also want to thank A. Pushnitski for a useful discussion.

\section{Preliminaries}

\subsection{Banded operators}
We consider operators $A:l^2(X)\to l^2(X)$, where $X$ is a Hilbert space, 
\[
l^2(X)=\left\{\mbfx =\{x_j\}_{j\in\Z},\  x_j\in X,\  \|\mbfx \|^2=\sum_j\|x_j\|_X^2<\infty\right\}.
\] This includes operators on $l^2$ sequences over $\Z^d$, we identify this space with $l^2(l^2(\Z^{d-1}))$. We assume that  $A:l^2(X)\to l^2(X)$ is a banded operator,
i.e., for some integer $s$
\begin{equation}
\label{eq:A}
(A\mathbf{x})_j=\sum_{k=j-s}^{j+s} A_{j,k}x_k, \ \mbfx\in l^2 (X),
\end{equation}
where  $A_{j,k}:X\to X$ are bounded operators.  We will refer to these operators as to entries of $A$. The number $2s$ plays the role of order  of $A$,  it will define 
the order of decay in the corresponding uniqueness statement.

In addition we assume that the "external" entries  $A_{j, j\pm s}$ are invertible and
\begin{equation}
\label{eq:matrix}
\|A_{j,j\pm s}^{-1}\|\le \delta^{-1},\ \|A_{j,k}\|\le a,
\end{equation}
for some $a,\delta >0$, independent of $j$.
 
 Clearly, the adjoint operator $A^*$ is also banded and satisfies the same conditions \eqref{eq:matrix}.
 
 \subsection{Generalized eigenvectors}
We consider generalised eigenvectors of $A^*$.  Since $A^*$ is a banded operator, the expression  $A^*\mathbf{e}$ makes sense for any sequence  $\mathbf{e}=\{e_j\}_{j\in\Z}$ with $e_j\in X$. 
 We say that $\mathbf{e} $   is  a generalized eigenvector if $A^*\mathbf{e}=\lambda_0\mathbf{e}$ for some $\lambda_0\in\C$.

For any $\lambda\in\C$ and any vectors $e_{-s}, e_{-s-1},..., e_{s-1}\in X$ there exists  a unique vector  $\mathbf{e}(\lambda)=\{e_j(\lambda)\}_{j\in\Z}$ with $e_j(\lambda)\in  X$ such that \[e_{j}(\lambda)=e_j,\  j=-s,...,s-1,\quad{\text{and}}\quad A^*\mathbf{e}(\lambda)=\lambda \mathbf{e}(\lambda).\] It is defined by
\begin{align}
\label{eq:ev1} e_j(\lambda)&=e_j,\quad j=-s,..., s-1,\\ 
\label{eq:ev2} e_{s+k}(\lambda)&=(A^*_{s+k,k})^{-1}\left(\sum_{m=-s}^{s-1}A^*_{m+k,k}e_{m+k}(\lambda)-\lambda e_{k}(\lambda)\right),\quad k\ge 0,\\
\label{eq:ev3} e_{-s-k}(\lambda)&=(A^*_{-s-k,-k})^{-1}\left(\sum_{m=-s+1}^sA^*_{m-k,-k}e_{m-k}(\lambda)-\lambda e_{-k}(\lambda)\right),\quad k\ge 1.
\end{align}
The vectors $e_j(\lambda)$ are polynomials in $\lambda$ (with values in $X$) of degree less than $[|j|/s]+1$. 
 Let $M=\max_{-s\le j<s}\|e_j\|$,  then an induction argument yields
\[
\| e_n(\lambda)\|\le My^{n+s},\  n\ge -s,\]
for all $y>1$  such that $y^{2s}\ge \delta^{-1}(a(y^{2s-1}+y^{2s-2}+...+y+1)+|\lambda|y^{s})$. We multiply the last inequality by $(y-1)$ and see that it holds if \[y^{2s+1}\ge (a\delta^{-1}+1)y^{2s}+\delta^{-1}|\lambda|y^{s+1}.\] Which is in turn satisfied if we choose $y\ge \delta^{-1/s}|\lambda|^{1/s}+a\delta^{-1}+1$. Similar estimates can be repeated for negative $n$. We obtain
\begin{equation}\label{eq:evest}
\|e_{ks+r}(\lambda)\|,\|e_{-ks-r-1}(\lambda)\|\le CM\delta^{-k}(|\lambda|+b)^{k+2},\quad k\ge 1,\  0< r\le s,\end{equation}
for some $b=b(s,a,\delta).$

\subsection{Model examples} Our main example is $A=\alpha\Delta_d$, where  $\Delta_d$  is  the  discrete lattice Laplacian given by \eqref{eq:dL} and $\alpha\in\C$.
 Clearly, this is an operator of the form \eqref{eq:A} with  $X=l^2(\Z^{d-1})$, $s=1$, $A_{j,j\pm 1}=\alpha I$ and  $A_{j,j}=\alpha(\Delta_{d-1}-2I)$.
 
 For $d=1$ solutions to the corresponding evolution problem can be expressed in terms of the  Bessel functions of the second  kind, one of them is    
\[
u_n(t)=I_{n}(2\alpha (t-t_0))e^{-2\alpha (t-t_0)}. 
\]

In higher dimension we have solutions of the form
\[ 
u_n(t)=\left\{I_n(2\alpha(t-t_0))\left(\prod_{l=1}^{d-1} I_{n_l}(2\alpha(t-t_0)\right)e^{-2d\alpha(t-t_0)}\right\}_{(n_1,...n_{d-1})\in\Z^{d-1}}.
\]

The powers of the discrete Laplacian provide examples of  higher order operators that satisfies our assumptions. However a simpler model is given by the  operator with  $A_{j,j\pm s}= I$, $A_{j,j}=-2 I$ and $A_{j,k}=0$ otherwise. Then a  solution is given by
\[
u_n(t)=C_rI_{q}(2(t-t_0)),\quad n=qs+r,\  0\le r<s.
\]
For $t_0=T/2$  this solution indicates the critical speed of decay in spatial variables:
$$
|u_n(0)|+|u_n(T)| \asymp |q|^{-1/2} \left ( \frac{eT}{2|q|}\right)^{|q|}.
$$

\section[Orthogonality to generalized eigenfunctions]{Orthogonality to generalized eigenfunctions and self-adjoint operators} 
\subsection{Controlled decay} We  need the following auxiliary statement.
\begin{lemma}
\label{lemma:pre} Suppose that  $\mbfu:[0,T]\rightarrow l^2(X)$ is a solution to 
\eqref{eq:ev} and  $A$ satisfies conditions \eqref{eq:A} and \eqref{eq:matrix}. 
  Suppose further that
\begin{equation}\label{eq:gr0}
\|u_j(0)\|_X\le C_0^{k}k^{-k/2},\quad k=[|j|/s]+1.
\end{equation}
Then for each $t\in[0,T]$ there exists $C_t$ such that
\begin{equation}
\label{eq:gr2}
\|u_j(t)\|_X\le C_t^{k}k^{-k/2},\quad k=[|j|/s]+1,\quad  t\in[0,T]. 
\end{equation}
\end{lemma}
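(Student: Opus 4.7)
The plan is to expand the solution via its Taylor series $\mathbf{u}(t)=\sum_{n\ge 0}(t^n/n!)A^n\mathbf{u}(0)$, which converges in $l^2(X)$ since $A$ is bounded. The key point is that bandedness forces $A^n$ to have bandwidth $ns$, so $(A^n\mathbf{u}(0))_j$ depends only on $u_k(0)$ with $|k-j|\le ns$. Combining $\|A_{j,k}\|\le a$ from \eqref{eq:matrix} with the observation that at most $(2s+1)^n$ paths of length $n$ with steps of size $\le s$ start at $j$, one obtains the pointwise estimate
\[
\|(A^n\mathbf{u}(0))_j\|_X \;\le\; (a(2s+1))^n \sup_{|k-j|\le ns}\|u_k(0)\|_X.
\]

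Set $K=[|j|/s]+1$, $\beta=ta(2s+1)$, and $\varphi(m)=C_0^m m^{-m/2}$, so that the hypothesis reads $\|u_k(0)\|_X\le\varphi([|k|/s]+1)$. The function $\varphi$ is bounded on the positive integers by some constant $M_0=M_0(C_0)$ and is monotonically decreasing once $m\gtrsim C_0^2/e$. I would split the series in $n$ at $n=K/2$. For $n\le K/2$ and $K$ large, every relevant index $k$ satisfies $[|k|/s]+1\ge K-n\ge K/2$, so the supremum equals $\varphi(K-n)$. The elementary inequality $(K-n)^{-(K-n)/2}\le (eK)^{n/2}K^{-K/2}$ (which follows from $(1+n/(K-n))^{(K-n)/2}\le e^{n/2}$) then bounds the inner part of the series by
\[
C_0^K K^{-K/2}\sum_{n\ge 0}\frac{\bigl(\beta C_0^{-1}\sqrt{eK}\bigr)^n}{n!} \;=\; C_0^K K^{-K/2}\exp\bigl(\beta C_0^{-1}\sqrt{eK}\bigr),
\]
and for $K$ large the sub-exponential factor $\exp(\mathrm{const}\cdot\sqrt{K})$ is absorbed into $(C_0(1+\varepsilon))^K$.

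For the tail $n>K/2$, I would use the crude bound $\sup\|u_k(0)\|_X\le M_0$ together with the Stirling-type tail estimate $\sum_{n>N}\beta^n/n!\le e^\beta(e\beta/N)^N$, which yields a contribution bounded by $M_0 e^\beta(\sqrt{2e\beta})^K K^{-K/2}$. Choosing $C_t$ slightly larger than $\max\{C_0,\sqrt{2e\beta}\}$ then accommodates both halves of the sum, and the finitely many small values of $K$ where the monotonicity argument fails are handled by further enlarging $C_t$.

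The main technical difficulty is bookkeeping: the sub-exponential factor $\exp(\mathrm{const}\cdot\sqrt{K})$ and the polynomial Stirling corrections must be cleanly absorbed into a slightly larger base $C_t>C_0$, while the threshold on $K$ needed for the monotonicity of $\varphi$ must be tracked throughout. Otherwise the argument reduces to a standard power-series estimate that exploits only the banded structure of $A$ and the uniform bound on its entries.
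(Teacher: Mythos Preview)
Your argument is correct: expanding the solution as $\mathbf{u}(t)=\sum_{n\ge 0}(t^n/n!)A^n\mathbf{u}(0)$ and exploiting that $A^n$ has bandwidth $ns$ to get a pointwise bound, then splitting the series at $n=K/2$, does yield the stated estimate. The bookkeeping you flag (monotonicity threshold for $\varphi$, absorbing $\exp(c\sqrt{K})$ into the base) is routine.

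The paper, however, takes a rather different and shorter route. Instead of the Taylor expansion, it introduces the weighted energy
\[
f_B(t)=\sum_{j}B^{|j|}\|u_j(t)\|_X^2,
\]
shows the differential inequality $f_B'(t)\le C_1 B^s f_B(t)$ directly from the banded structure, integrates (Gronwall) to get $f_B(t)\le e^{C_1 B^s t}f_B(0)$, and then observes that the hypothesis gives $f_B(0)\le e^{C_2 B^s}$. This yields $\|u_j(t)\|_X^2\le B^{-|j|}e^{C_3 B^s}$ for every $B$, and optimizing in $B$ produces the desired $C_t^k k^{-k/2}$ bound. The weighted-energy approach avoids the case split and the Stirling-type tail estimate entirely; it also does not rely on the explicit solution formula $e^{tA}$, so it would extend with no change to time-dependent banded $A(t)$, where your power-series argument would not apply directly. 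On the other hand, your method is perhaps more transparent about \emph{why} the critical decay scale is $k^{-k/2}$: it emerges from balancing $\beta^n/n!$ against the growth $(\sqrt{K})^n$ coming from the initial data, rather than from an optimization in an auxiliary parameter.
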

\begin{proof}
Consider the function  $f_B(t)=\sum_{j}B^{|j|}\|u_j(t)\|_X^2$. It satisfies the differential inequality $f'_B(t)\le C_1B^s f_B(t)$, where $C_1$ does not depend on $B$.
Therefore
\beq
\label{eq:diffin}
f_B(t)\le e^{C_1B^s t} f_B(0).
\eeq
 In addition, \eqref{eq:gr0} implies that $f_B(0)\le e^{C_2B^s}$. Then $f_B(t)\le e^{C_3B^s}$ with $C_3=C_3(t)$ and,  in particular, $\|u(j,t)\|^2\le B^{-|j|}e^{C_3B^s}.$ We
 optimize the last inequality by choosing $B\asymp k$ and get the required estimate \eqref{eq:gr2}.
 
 In this argument we assumed that $f_B(t)$ is well-defined for all $B$. To justify this one can first consider the functions
\[
\tilde {f}_{N,B}(t)=\sum_{j}\min\{B^{|j|}, B^N\}\|u(j,t)\|_X^2,
\]
  obtain  estimate \eqref{eq:diffin} for these functions with constants independent of $N$, and then pass to the limit as $N\to\infty$.
\end{proof}
\begin{corollary} Let the function $\mbfu:[0,T]\rightarrow l^2(X)$ satisfy the hypothesis of  Lemma \ref{lemma:pre} and  $\mbfe$ be a generalized eigenvector of $A^*$. Then the inner product 
$$
 \langle \mbfu(t),\mathbf{e}\rangle  = \sum_{j\in\Z}\langle u_j(0), e_j\rangle_X
$$
is well-defined.
\end{corollary}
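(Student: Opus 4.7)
The plan is to show that the series $\sum_j \langle u_j(t),e_j\rangle_X$ converges absolutely for every $t\in[0,T]$, by combining the two available pointwise bounds: the evolved decay of $\mathbf{u}(t)$ supplied by Lemma \ref{lemma:pre}, and the controlled growth of a generalized eigenvector supplied by \eqref{eq:evest}.

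First, I would apply Lemma \ref{lemma:pre} directly. Since $\mathbf{u}$ satisfies the hypothesis \eqref{eq:gr0}, we obtain for each fixed $t\in[0,T]$ a constant $C_t$ with
\[
\|u_j(t)\|_X \le C_t^{\,k} k^{-k/2}, \qquad k=[|j|/s]+1.
\]
Next, since $\mathbf{e}$ is a generalized eigenvector of $A^*$ with some eigenvalue $\lambda_0\in\C$, the vector $\mathbf{e}$ is of the form $\mathbf{e}(\lambda_0)$ constructed via \eqref{eq:ev1}--\eqref{eq:ev3} starting from the finite block $e_{-s},\dots,e_{s-1}$. Setting $M=\max_{-s\le j<s}\|e_j\|_X$, estimate \eqref{eq:evest} specialized at $\lambda=\lambda_0$ yields
\[
\|e_j\|_X \le CM\delta^{-k}(|\lambda_0|+b)^{k+2} \le K^{k},\qquad k=[|j|/s]+1,
\]
for a constant $K=K(\lambda_0,s,a,\delta,M)$.

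Combining the two estimates by Cauchy--Schwarz in $X$ gives
\[
|\langle u_j(t),e_j\rangle_X| \le \|u_j(t)\|_X\,\|e_j\|_X \le (C_tK)^{k} k^{-k/2}.
\]
For each $k\ge 1$ there are at most $2s$ indices $j\in\Z$ with $[|j|/s]+1=k$, so regrouping the sum by $k$ we obtain
\[
\sum_{j\in\Z} |\langle u_j(t),e_j\rangle_X| \le 2s\sum_{k\ge 1} (C_tK)^{k} k^{-k/2} = 2s\sum_{k\ge 1}\Bigl(\frac{C_tK}{\sqrt{k}}\Bigr)^{\!k}.
\]
The general term tends to zero super-exponentially once $k>(C_tK)^2$, so the series converges (say by the root test), establishing absolute convergence of $\sum_j\langle u_j(t),e_j\rangle_X$.

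There is really no serious obstacle here; the only thing to check carefully is that \eqref{eq:evest} applies with a single constant depending on $\lambda_0$ (which it does, since the initial block $\{e_j\}_{-s\le j<s}$ of $\mathbf{e}$ is a finite collection of vectors in $X$ with finite norm), and that the combinatorial factor from counting indices $j$ with a given $k$ is just $O(s)$. The super-exponential decay $k^{-k/2}$ inherited from Lemma \ref{lemma:pre} then dominates the merely exponential growth $K^k$ of the generalized eigenvector.
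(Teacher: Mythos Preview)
Your argument is correct and follows exactly the approach indicated in the paper: combine the super-exponential decay of $\|u_j(t)\|_X$ from Lemma~\ref{lemma:pre} with the at-most-exponential growth of $\|e_j\|_X$ from \eqref{eq:evest}, and observe that the resulting series converges absolutely. You have simply filled in the details the paper left implicit.
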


This statement follows from the  lemma  and the fact that $\|e_j\|$ grows in $j$ not faster than exponentially, see \eqref{eq:evest}.


\subsection{Orthogonality}
We  now prove that   any  solution to \eqref{eq:ev} which decays at two moments    faster
 than the model one is orthogonal to all generalized eigenvectors of $A^*$.
\begin{proposition} \label{pr:onesa}
Suppose that $A:l^2(X)\to l^2(X)$ is a banded operator satisfying \eqref{eq:A} and \eqref{eq:matrix}. Suppose that $\mathbf{e}$ is a generalized eigenvector of $A^*$.
Let further $\mbfu:[0,T]\to l^2(X)$ satisfy $\partial_t \mbfu=A\mbfu$,  and
\begin{equation}\label{eq:gr1}
\|u_j(t)\|_X\le Ce^{|k|}(2+\varepsilon)^{-|k|}|k|^{-|k|}T^{|k|}\delta^{|k|},\quad k=[j/s],\quad {\text{when}}\ \ t=0,T.
\end{equation}
Then $\langle u(0),\mathbf{e}\rangle=0 $.  
\end{proposition}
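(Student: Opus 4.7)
The plan is to embed $\mbfe$ in a holomorphic family of generalized eigenvectors of $A^*$ and to reduce the orthogonality claim to growth estimates for two entire functions linked by an exponential factor. Fix $e_{-s},\ldots,e_{s-1}$ to be the initial coordinates of the given $\mbfe$, and let $\mbfe(\mu)=\{e_j(\mu)\}_{j\in\Z}$ be defined by \eqref{eq:ev1}--\eqref{eq:ev3}, so that $A^*\mbfe(\mu)=\mu\mbfe(\mu)$, $\mbfe(\lambda_0)=\mbfe$, and each $e_j(\mu)$ is a polynomial in $\mu$ with values in $X$. Introduce
\[
F(\lambda):=\sum_{j\in\Z}\langle u_j(0),e_j(\bar\lambda)\rangle_X,\qquad G(\lambda):=\sum_{j\in\Z}\langle u_j(T),e_j(\bar\lambda)\rangle_X.
\]
The conjugation in $\bar\lambda$ compensates the conjugate-linearity of $\langle\cdot,\cdot\rangle_X$ in the second slot, so each summand --- and hence each series, by the convergence established in the corollary above --- is holomorphic in $\lambda$. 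Moreover $F(\bar\lambda_0)=\langle\mbfu(0),\mbfe\rangle$, so it suffices to show $F\equiv 0$.

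Next, I would establish two key properties of $F$ and $G$. First, differentiating under the sum and using $A^*\mbfe(\bar\lambda)=\bar\lambda\,\mbfe(\bar\lambda)$ yields
\[
\partial_t\langle\mbfu(t),\mbfe(\bar\lambda)\rangle_X=\langle A\mbfu(t),\mbfe(\bar\lambda)\rangle_X=\langle\mbfu(t),\bar\lambda\,\mbfe(\bar\lambda)\rangle_X=\lambda\,\langle\mbfu(t),\mbfe(\bar\lambda)\rangle_X,
\]
and hence $G(\lambda)=e^{T\lambda}F(\lambda)$. Second, a sharp exponential-type bound: combining \eqref{eq:evest} with \eqref{eq:gr1} and grouping $j$ by $k=[|j|/s]$, the contribution of group $k$ is controlled by $C(1+|\lambda|)^{2}\bigl(eT(|\lambda|+b)/((2+\varepsilon)k)\bigr)^{k}$; a saddle-point computation (maximum at $k\sim T(|\lambda|+b)/(2+\varepsilon)$) then gives
\[
|F(\lambda)|+|G(\lambda)|\le C(1+|\lambda|)^{3}\exp(\tau|\lambda|),\qquad \tau:=\frac{T}{2+\varepsilon}<\frac{T}{2}.
\]

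To close the argument, set $H(\lambda):=e^{T\lambda/2}F(\lambda)=e^{-T\lambda/2}G(\lambda)$. Using the first expression when $\mathrm{Re}\,\lambda\le 0$ and the second when $\mathrm{Re}\,\lambda\ge 0$ gives the indicator estimate $h_H(\theta)\le \tau-(T/2)|\cos\theta|$, which is strictly negative at $\theta=0$ and at $\theta=\pi$. Since the support function of a nonempty compact convex set --- in particular the indicator diagram of any nonzero entire function of exponential type --- cannot be strictly negative in two opposite directions, one concludes $H\equiv 0$, whence $F\equiv 0$ and therefore $\langle\mbfu(0),\mbfe\rangle=F(\bar\lambda_0)=0$. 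The main obstacle is the sharp type computation: the constant $\tau$ must land strictly below $T/2$, and the factors $(2+\varepsilon)^{-|k|}|k|^{-|k|}T^{|k|}\delta^{|k|}$ in \eqref{eq:gr1} and $\delta^{-k}(|\lambda|+b)^{k+2}$ in \eqref{eq:evest} are precisely tuned so that the saddle-point extremum becomes $\exp(T|\lambda|/(2+\varepsilon))$; once this is in hand the closing Phragm\'en--Lindel\"of / indicator-diagram argument is routine.
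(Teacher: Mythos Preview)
Your proof is correct and follows essentially the same approach as the paper: embed $\mbfe$ in the analytic family $\mbfe(\lambda)$, use the evolution equation to obtain the exponential relation $\phi(T,\lambda)=e^{T\lambda}\phi(0,\lambda)$, combine \eqref{eq:gr1} with \eqref{eq:evest} to bound both functions by type $T/(2+\varepsilon)$, and finish with an indicator argument. The only cosmetic differences are your explicit use of $\bar\lambda$ to track conjugate-linearity (the paper places $\mbfe(\lambda)$ in the linear slot instead) and your symmetrized $H(\lambda)=e^{T\lambda/2}F(\lambda)$ in the closing step, where the paper directly compares $h_T(0)=T+h_0(0)$ with the bound $|h_0|,|h_T|<T/(2+\varepsilon)$; the two closing arguments are equivalent.
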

\begin{proof}  Let  $A^*\mathbf{e}=\lambda_0 \mathbf{e}$, $\mathbf{e}=\{e_j\}_j$, we define a family $\mathbf{e}(\lambda)$ 
of generalized eigenvectors by (\ref{eq:ev1}-\ref{eq:ev3}).  
In this way the eigenvector $\mathbf{e}$ is included into an analytic family of eigenvectors $\mathbf{e}(\lambda)$, $\lambda\in\C$. We consider the family of entire functions  
$$
\phi(t,\lambda)=\langle  \mathbf{e}(\lambda), \mbfu(t,)\rangle_{l^2(X)}=\sum_j \langle e_j(\lambda), u_j(t)\rangle_{X}.
$$
Differentiating with respect to $t$, we obtain
\[
\partial_t\phi(t,\lambda)=\langle \mathbf{e}(\lambda), A\mathbf{u}\rangle=\langle A^*\mathbf{e}(\lambda), \mathbf{u}\rangle =\lambda\phi(t, \lambda).
\]
Then for each $\lambda$ we have 
\beq
\label{eq:entire}
\phi(t,\lambda)=e^{\lambda t}\phi(0,\lambda).
\eeq
At the same time estimates (\ref{eq:gr1}) and (\ref{eq:evest}) give
\beq
\label{eq:growth}
|\phi(0,\lambda)|, |\phi(T,\lambda)|\le C e^{T|\lambda|/(2+\varepsilon)}.
\eeq
  
 The proof can be now completed in the same spirit as Theorem 2.3 in   \cite{JLMP}. We include a brief argument  in order to make 
 the presentation mainly self-contained and refer the reader to  monograph \cite{L} for definitions and basic facts related to entire functions. Let 
 $$
 h_0(\theta)=\limsup_{r\to \infty}\frac{\ln |\phi(0,re^{i\theta})|}{r}, \  h_T(\theta)=\limsup_{r\to \infty}\frac{\ln |\phi(T,re^{i\theta})|}{r},  \ 
 \theta\in[0,2\pi]
 $$
  be the indicator functions of the entire functions $\phi(0,\lambda)$ and $\phi(T,\lambda)$.  Relation \eqref{eq:entire} for $\theta=0$ and $t=T$ yields
  \beq
  \label{eq:first}
  h_T(0)=T+h_0(0).
  \eeq
  On the other hand it follows from \eqref{eq:growth}  that 
  $$
  h_0(\theta), h_T(\theta) < \frac{T}{2+\varepsilon}, \  \theta\in [0,2\pi],
  $$
  and, by (5) in  \cite[Lecture 8]{L} (for our case $\rho=1$ in this relation),
   $$
  |h_0(\theta)|, |h_T(\theta)| < \frac{T}{2+\varepsilon}, \ \theta\in [0,2\pi].
  $$
  The later inequality is incompatible with \eqref{eq:first} unless $\phi(0,\lambda)=0$.
\end{proof}

\subsection{Selfadjoint case} In this subsection $X=l^2(\Z^{d-1})$ and $A=A^*$ or $A=cA^*$ for some $c\in\C$. This happens for example in the model cases of heat or Schr\"odinger evolutions with real potentials.

The elements in $l^2(\Z^d)$ are denoted by $\mathbf{x}=\{x_k\}_k,\ x_k\in l^2(\Z^{d-1})$. We say that $k$ is the main variable and call the $d-1$ arguments of  $x_k$ complementary spatial variables. In order to obtain the completeness of the generalized eigenvectors, and thus prove the uniqueness theorem applying the results of the previous subsections, we include $l^2(\Z^d)$ into an appropriate Gelfand triple $\Phi\hookrightarrow l^2(\Z^d)\hookrightarrow \Phi'$, see e.g. \cite{B, GS, GV}. This can be done by demanding some decay of solution in complementary variables.

Given $\alpha\in\R$ we consider the weighted space
\[l^2_\alpha(\Z^{d-1})=\{\mathbf{c}=\{c_m\}_{m\in\Z^{d-1}}:\|\mathbf{c}\|_\alpha^2=\sum_{m\in\Z^{d-1}}(1+|m|)^\alpha|c_m|^2<\infty\}.\]
\begin{theorem}\label{th:sa}
Suppose that $\alpha>d-1$ and $A:l^2(l^2(\Z^{d-1}))\to l^2(l^2(\Z^{d-1}))$, $(A\mbfu)_j=\sum_{k=j-s}^{j+s}A_{j,k}u_k$,  is a banded operator, where $A_{j,k}$ are bounded in $l^2(\Z^{d-1})$ as well as in $l^2_\alpha(\Z^{d-1})$. Let further the external operators $A_{j,j\pm s}$ be invertible in $l^2_\alpha(\Z^{d-1})$ and
\[ \|A_{j,j\pm s}^{-1}\|_{l^2_\alpha\to l^2_\alpha}\le \delta^{-1},\ \|A_{j,k}\|_{l^2_\alpha\to l^2_\alpha}\le M,\ k=j-s,...,j+s.\]
If  $\mbfu:[0,T]\to l^2(l^2_\alpha(\Z^{d-1}))$ satisfies $\partial_t \mbfu=A\mbfu $, and the decay condition  in main spatial variable
\begin{equation*}
\|u(t,j)\|_{l^2_\alpha(\Z^{d-1})}\le Ce^{|k|}(2+\varepsilon)^{-|k|}|k|^{-|k|}T^{|k|}\delta^{|k|},\quad k=[j/s],\quad {\text{for}}\ \ t=0,T,
\end{equation*}
Then $u\equiv 0$
\end{theorem}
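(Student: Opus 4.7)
The proof combines the orthogonality statement of Proposition \ref{pr:onesa} with completeness of the system of generalized eigenvectors of $A$ obtained from Berezansky's theorem in a quasi-nuclear rigging of $l^2(\Z^d)$. I would begin by applying Proposition \ref{pr:onesa} with $X=l^2_\alpha(\Z^{d-1})$: the hypotheses of the theorem on $A_{j,k}$ as operators on $l^2_\alpha$ are precisely \eqref{eq:matrix} in this setting, and the decay of $\|u_j(t)\|_{l^2_\alpha}$ matches \eqref{eq:gr1}, so the proposition gives $\langle\mathbf{u}(0),\mathbf{e}\rangle=0$ for every generalized eigenvector $\mathbf{e}$ of $A^*$ built from the recursion \eqref{eq:ev1}--\eqref{eq:ev3}.

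To harvest self-adjointness of $A$ on $l^2(\Z^d)$, I would set up the Gelfand triple
\[
\Phi=l^2_\beta\bigl(l^2_\alpha(\Z^{d-1})\bigr)\hookrightarrow l^2(\Z^d)\hookrightarrow\Phi',
\]
for some fixed $\beta>1$. The assumption $\alpha>d-1$ makes $l^2_\alpha(\Z^{d-1})\hookrightarrow l^2(\Z^{d-1})$ Hilbert--Schmidt (since $\sum_{m\in\Z^{d-1}}(1+|m|)^{-\alpha}<\infty$), and $\beta>1$ yields the analogous statement in the main variable, so the combined embedding $\Phi\hookrightarrow l^2(\Z^d)$ is Hilbert--Schmidt. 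The banded form of $A$ together with uniform $l^2_\alpha$-boundedness of the entries $A_{j,k}$ shows $A\colon\Phi\to\Phi$ continuously. Berezansky's theorem (see \cite{B,GS,GV}) now furnishes a spectral measure $\mu$ on $\R$ and a $\mu$-measurable family $\{\mathbf{e}_\lambda\}\subset\Phi'$ of generalized eigenvectors of $A$ together with a Parseval identity $\|v\|_{l^2(\Z^d)}^2=\int_{\R}|\langle v,\mathbf{e}_\lambda\rangle|^2\,d\mu(\lambda)$ for $v\in\Phi$; consequently any $v\in\Phi$ pairing to zero against $\mathbf{e}_\lambda$ for $\mu$-almost every $\lambda$ vanishes.

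Since the decay hypothesis forces $\mathbf{u}(0)\in\Phi$, it remains to verify $\langle\mathbf{u}(0),\mathbf{e}_\lambda\rangle=0$ for $\mu$-a.e.\ $\lambda$. Each $\mathbf{e}_\lambda$ can be viewed as a banded sequence with entries in $l^2_{-\alpha}(\Z^{d-1})$ satisfying the pointwise recursion \eqref{eq:ev1}--\eqref{eq:ev3} with $A^*$ replaced by $A$; here self-adjointness transfers invertibility of $A_{j,j\pm s}$ from $l^2_\alpha$ to $l^2_{-\alpha}$ by duality, and the estimate \eqref{eq:evest} persists in the $l^2_{-\alpha}$-norm. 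The argument of Proposition \ref{pr:onesa} then carries through with the duality pairing between $\Phi$ and $\Phi'$ in place of the $X$-inner product: the entire function $\phi(t,\lambda)=\langle\mathbf{u}(t),\mathbf{e}_\lambda\rangle$ satisfies $\partial_t\phi=\lambda\phi$ together with the two-time bound \eqref{eq:growth}, so the indicator argument forces $\phi(0,\lambda)\equiv 0$. Completeness yields $\mathbf{u}(0)=0$, and uniqueness for the bounded linear ODE $\partial_t\mathbf{u}=A\mathbf{u}$ then gives $\mathbf{u}\equiv 0$.

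The main obstacle is the interface between Proposition \ref{pr:onesa}, which is formulated with eigenvector entries in a single Hilbert space $X$, and Berezansky's expansion, which naturally produces eigenfunctionals in the dual space $l^2_{-\alpha}$. Bridging them requires a duality transfer of the invertibility and boundedness of the external entries (using self-adjointness of $A$ on $l^2(\Z^d)$) and a small reformulation of the entire-function argument with the $l^2(\Z^d)$ pairing rather than the $X$-inner product; one also has to check that the growth control \eqref{eq:evest} survives this reformulation.
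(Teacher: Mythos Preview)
Your approach is essentially the same as the paper's: build a Hilbert--Schmidt rigging $\Phi\hookrightarrow l^2(\Z^d)\hookrightarrow\Phi'$, invoke Berezansky's completeness theorem for self-adjoint $A$, and feed the generalized eigenvectors into the entire-function argument of Proposition~\ref{pr:onesa}. The only visible difference is cosmetic: the paper takes the weight $e^{|k|^{1/2}}$ in the main variable instead of your polynomial weight $(1+|k|)^\beta$, and it is terser about the duality transfer (the $l^2_\alpha\leftrightarrow l^2_{-\alpha}$ issue) that you spell out explicitly; otherwise the logic matches.
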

\begin{remark} In the model case, when  $A$ is a  the sum of the Laplace operator and a real bounded potential (up to a unimodular factor), the operators $A_{j,k}$ are bandlimited themselves and bounded in weighted spaces, moreover $A_{j,j\pm s}$ are identity operators and the norm estimate holds with $\delta=1$.
\end{remark} 
\begin{proof} We consider the space 
\[\Phi=\{\mathbf{C}=\{\mathbf{c}_k\}_{k\in\Z},\ \mathbf{c_k}\in l^2_\alpha(\Z^{d-1}): \|\mathbf{C}\|_\Phi^2=\sum_{k\in\Z}e^{|k|^{1/2}}\|\mathbf{c_k}\|^2_{\alpha}<\infty\}.\]
Then the dual space (with respect to pairing in $l^2(\Z^d)$ is
\[\Phi'=\{\mathbf{C}=\{\mathbf{c}_k\}_{k\in\Z},\ \mathbf{c_k}\in l^2_\alpha(\Z^{d-1}): \|\mathbf{C}\|_{\Phi'}^2=\sum_{k\in\Z}e^{-|k|^{1/2}}\|\mathbf{c_k}\|^2_{{-\alpha}}<\infty\}.\]
We have $\Phi\hookrightarrow l^2(\Z^d)\hookrightarrow\Phi'$ and the inclusion is a Hilbert-Schmidt operator since $\alpha>d-1$. We observe also that $A:\Phi\to\Phi$ and hence $A:\Phi'\to\Phi'$ are bounded operators. By repeating the arguments of the previous section, we obtain that $\mathbf{u}(0)\in\Phi$ is orthogonal to all generalized eigenvectors of $A$ in $\Phi'$. Then by general result, see for example \cite[Chapter V,Theorem 1.4]{B}, we obtain that $\mathbf{u}(0)=0$. 
\end{proof}
\section{A sharp uniqueness result for bounded evolutions}
\label{s:CJ}
\subsection{Main result} We are now ready to prove our main result.
\begin{theorem}\label{th:m}
Suppose that $A:l^2(X)\to l^2(X)$, $(A\mbfu)_j=\sum_{k=j-s}^{j+s}A_{j,k}u_k$,  is a banded operator satisfying \eqref{eq:A} and \eqref{eq:matrix}. Further, assume that all operators $A_{j,k}$ commute.
Let  $\mbfu:[0,T]\to l^2(X)$ satisfy $\partial_t \mbfu=A\mbfu $, and the decay condition \eqref{eq:gr1}:
\begin{equation*}
|u(t,j)|\le Ce^{|k|}(2+\varepsilon)^{-|k|}|k|^{-|k|}T^{|k|}\delta^{|k|},\quad k=[j/s],\quad {\text{for}}\ \ t=0,T,
\end{equation*}
Then $u\equiv 0$
\end{theorem}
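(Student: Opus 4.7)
By Proposition~\ref{pr:onesa}, the decay hypothesis at $t=0$ and $t=T$ already forces $\mbfu(0)$ to be orthogonal in $l^2(X)$ to every generalized eigenvector of $A^*$. The remaining task is therefore a completeness statement: the only element of $l^2(X)$ (with the decay guaranteed by Lemma~\ref{lemma:pre}) which is orthogonal to all such generalized eigenvectors is $\mbfu(0)=0$. Once this is established, the conclusion $\mbfu\equiv0$ follows from uniqueness for the bounded ODE \eqref{eq:ev}. My plan is to prove the completeness statement first in the scalar case $X=\C$ via a Shohat--Favard type construction adapted to the complex $(2s+1)$-diagonal matrix $A^*$, and then to lift it to general $X$ by exploiting the commutativity of the entries $A_{j,k}$.

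In the scalar case, the recursion \eqref{eq:ev1}--\eqref{eq:ev3} produces, for each choice of initial data $(e_{-s},\dots,e_{s-1})\in\C^{2s}$, polynomials $\{e_j(\lambda)\}_{j\in\Z}$ of degree $<[|j|/s]+1$ satisfying a $(2s+1)$-term recurrence in $\lambda$. The proof of Proposition~\ref{pr:onesa} already shows that for any such choice the entire function $\phi(0,\lambda)=\sum_j \langle e_j(\lambda),u_j(0)\rangle$ vanishes identically. To recover the individual coefficients, I would construct, alongside these first-kind polynomials $e^{(p)}_j$ (one family for each of the $2s$ canonical initial values, $p=0,\dots,2s-1$), a dual second-kind family $f^{(q)}_j$ obtained by shifting the initial data by $s$ steps, together with a linear moment functional $L$ on polynomials for which a biorthogonality relation of the form $L\bigl(e^{(p)}_m f^{(q)}_n\bigr)=\delta_{mn}\delta_{pq}$ holds. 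Applying $L(\,\cdot\,f^{(q)}_n(\lambda))$ termwise to the identity $\phi(0,\lambda)\equiv 0$ would then extract $u_n(0)=0$ for every $n$ and every $q$, hence $\mbfu(0)=0$.

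To pass to general $X$, I would use that the bounded operators $\{A_{j,k}\}$, being pairwise commuting, generate a commutative (closed) subalgebra $\mathcal{B}\subset\mathcal{L}(X)$. Via the Gelfand transform on the maximal ideal space $\Omega$ of $\mathcal{B}$, each $A_{j,k}$ corresponds to a scalar function $a_{j,k}\in C(\Omega)$, and the recursion defining the generalized eigenvectors of $A^*$ decouples fibrewise: for every character the associated scalar banded matrix $(\chi(A^*_{j,k}))_{j,k}$ satisfies \eqref{eq:A}--\eqref{eq:matrix} with constants inherited from $X$ (since $|\chi(B)|\le \|B\|$). Coupling the orthogonality relation with a sufficiently rich family of vectors drawn from the joint functional calculus of $\{A_{j,k}\}$ then reduces the vanishing $\mbfu(0)=0$ to the scalar statement proved in the previous step.

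The main technical obstacle is the Shohat--Favard step in the complex, higher-order setting: unlike the classical self-adjoint Jacobi case $s=1$, the moment functional $L$ is not represented by a positive measure, and care is needed both in defining it on a sufficiently large polynomial space and in interchanging it with the infinite sum $\sum_j$. Here the super-exponential decay of $u_j(0)$ supplied by Lemma~\ref{lemma:pre}, combined with the growth bounds \eqref{eq:evest} for $e^{(p)}_j(\lambda)$ (and the analogous bounds on $f^{(q)}_n(\lambda)$), is essential: it guarantees absolute and locally uniform convergence of $\sum_j \langle e^{(p)}_j(\lambda),u_j(0)\rangle\,f^{(q)}_n(\lambda)$ on $\C$, legitimizing the termwise application of $L$ that closes the argument.
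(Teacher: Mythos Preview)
Your overall architecture---use Proposition~\ref{pr:onesa} to obtain orthogonality and then prove a completeness statement for generalized eigenvectors---matches the paper exactly. The difference lies in how you propose to carry out the completeness step, and there your general-$X$ reduction has a genuine gap.

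In the scalar case you sketch a classical Shohat--Favard argument: build biorthogonal families $e^{(p)}_j,\,f^{(q)}_n$ and a moment functional $L$, then extract $u_n(0)$ by applying $L(\,\cdot\,f^{(q)}_n)$ termwise to $\phi(0,\lambda)\equiv 0$. The paper instead substitutes the operator $\bar A$ for $\lambda$ in the polynomials $P_j^{(r)}(\lambda)$ and proves the reproducing identity $\sum_{r} P_n^{(r)}(\bar A)\sigma^{(r)}=\sigma^{(n)}$, which immediately recovers $u_n(0)$. Your route is plausible, but you should be aware that the interchange ``apply $L$ termwise to an infinite sum'' is not justified by locally uniform convergence alone, since $L$ is only a linear functional on polynomials and has no a priori continuity; the legitimate move is to use that $\phi(0,\lambda)\equiv 0$ forces each Taylor coefficient $\sum_j u_j(0)\,[\lambda^m]\overline{P_j^{(r)}(\lambda)}$ to vanish (these are \emph{finite} sums since $\deg P_j^{(r)}<[|j|/s]+1$) and to work from there---which is precisely the scalar form of the paper's equation~\eqref{eq:mz}.

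The real problem is your passage to general $X$. The Gelfand transform acts on the commutative algebra $\mathcal{B}\subset\mathcal{L}(X)$, not on $X$: a character $\chi$ gives you scalar entries $\chi(A_{j,k}^*)$ and hence a scalar banded matrix, but it gives you no scalar ``fiber'' of $\mbfu(0)\in l^2(X)$ to which the scalar completeness could apply. There is no joint functional calculus or spectral decomposition for commuting non-normal operators that would let you disintegrate $X$ over $\Omega$, and the Gelfand map need not be injective (nontrivial radical), so even knowing something vanishes fiberwise would not yield vanishing in $\mathcal{B}$, let alone in $X$. The paper circumvents this entirely: it keeps the construction operator-valued, defines $\mathcal{P}_j^{(r)}:X\to l^2(X)$ by substituting $\bar A$ for $\lambda$, and uses the commutativity hypothesis only to swap the operator-coefficients $C_{j,m}^{(r)}$ with $A_{k,j}^*$ and with the entries of $\bar A^m$; this yields the reproducing identity $\sum_r\mathcal{P}_n^{(r)}v=i_n v$ directly, from which $\langle u_k,x\rangle=0$ for all $k$ and $x$ follows. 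If you want to salvage your approach, you would have to replace the Gelfand reduction by exactly this operator-substitution argument.
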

The theorem follows from Proposition \ref{pr:onesa} and the proposition below. In dimension one our result can be applied to  both heat and Schr\"odinger evolutions with bounded time-independent potentials as well as to evolutions defined by higher order difference operators. In higher dimension this approach allows to work only with  potentials depending on the variable in the direction of decay.
\begin{proposition}
\label{pr:ortog}
Let $\mathbf{u}=\{u_j\}_{j\in\Z}\in l^2(X)$ be such that 
\[
\sum_{j\in\Z}C^{|j|}\|u_j\|<\infty
\] 
for every $C$. Let also $\langle \mathbf{e},\mathbf{u}\rangle =0$ for each generalized eigenvector $\mathbf{e}$ of a banded operator $A^*$. Then $\mathbf{u}=\mathbf{0}$.
\end{proposition}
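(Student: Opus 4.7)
The plan is the following. Fix a vector-valued initial condition $\mathbf{v}=(v_{-s},\ldots,v_{s-1})\in X^{2s}$ and let $\mathbf{e}(\lambda;\mathbf{v})=\{e_j(\lambda;\mathbf{v})\}_j$ be the generalized eigenvector of $A^*$ at eigenvalue $\lambda$ built by (\ref{eq:ev1})--(\ref{eq:ev3}) from the data $\mathbf{v}$. The hypothesis $\sum_j C^{|j|}\|u_j\|<\infty$ for every $C$, together with the at most polynomial growth of $\|e_j(\lambda;\mathbf{v})\|_X$ in $|j|$ for fixed $\lambda$ (see \eqref{eq:evest}), show that
\[
\phi_\mathbf{v}(\lambda)=\sum_{j\in\Z}\langle e_j(\lambda;\mathbf{v}),u_j\rangle_X
\]
is an entire function of $\lambda$. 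By assumption $\langle\mathbf{e}(\lambda;\mathbf{v}),\mathbf{u}\rangle=0$ for every generalized eigenvector of $A^*$, so $\phi_\mathbf{v}(\lambda)\equiv 0$ for every $\mathbf{v}$. The task thus reduces to showing that, as $\mathbf{v}$ and $\lambda$ vary, the family $\mathbf{e}(\lambda;\mathbf{v})$ is rich enough to force $\mathbf{u}=\mathbf{0}$.

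I would first treat the scalar case $X=\C$, in the spirit of a complex-Jacobi-matrix Shohat--Favard construction. Here $e_j(\lambda;\mathbf{v})=\sum_{i=-s}^{s-1}p_{j,i}(\lambda)v_i$ with scalar polynomials $p_{j,i}$, and the recursion together with the invertibility bound \eqref{eq:matrix} shows that for $|j|$ large $p_{j,i}$ has degree $\lfloor|j|/s\rfloor$ with nonzero leading coefficient, expressible as a product of the entries $(A^*_{j,j\pm s})^{-1}$. Linearity in $\mathbf{v}$ decouples $\phi_\mathbf{v}\equiv 0$ into $2s$ identities
\[
\sum_{j\in\Z}p_{j,i}(\lambda)\overline{u_j}\equiv 0,\qquad i=-s,\ldots,s-1.
\]
Following the Favard-type construction for complex banded matrices, I would build a linear functional $L$ on $\C[\lambda]$ (together with a dual biorthogonal family of polynomials of the ``second kind'') that pairs the $p_{j,i}$'s into a non-degenerate bilinear form; applying $L$ against appropriate test polynomials then extracts the coefficients $u_j$ individually and forces each to vanish.

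For general $X$ the commutation hypothesis on the entries $A_{j,k}$ is used to keep the argument algebraic. The operators $\{A^*_{j,k}\}$ together with the inverses $(A^*_{j,j\pm s})^{-1}$ generate a commutative subalgebra $\mathcal{A}\subset\mathcal{B}(X)$, and unrolling the recursion gives $e_j(\lambda;\mathbf{v})=\sum_i Q_{j,i}(\lambda)v_i$ with $Q_{j,i}(\lambda)\in\mathcal{A}[\lambda]$. Since all coefficients commute, transferring operators through the inner product and using Gelfand-type functional calculus on $\mathcal{A}$ reduces the vanishing of $\phi_\mathbf{v}(\lambda)$ for arbitrary $\mathbf{v}\in X^{2s}$ to the scalar Shohat--Favard identity evaluated at each character of $\mathcal{A}$, whence $u_j=0$ in $X$.

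The step I expect to be the main obstacle is the Shohat--Favard construction on the bi-infinite index $j\in\Z$: the polynomials $p_{j,i}$ coming from $j\to+\infty$ and from $j\to-\infty$ both contribute degrees of order $\lfloor|j|/s\rfloor$, so naively matching coefficients of $\lambda^n$ in $\phi_\mathbf{v}\equiv 0$ gives a coupled system rather than a triangular one. One needs to treat the two half-axes separately, producing two biorthogonal families of polynomials of first and second kind, and then exploit the $2s$ independent choices of $\mathbf{v}$ to decouple the equations for $u_j$ with $j\ge 0$ from those for $j<0$. Making this decoupling survive the passage through the commutative algebra $\mathcal{A}$ in the operator setting is the second delicate point.
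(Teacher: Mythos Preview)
Your setup is correct and matches the paper: the polynomials $p_{j,i}(\lambda)$ (the paper calls them $P_j^{(r)}(\lambda)$) are exactly the right objects, and the $2s$ scalar identities $\sum_j p_{j,i}(\lambda)\overline{u_j}\equiv 0$ are where the real work begins. But from that point on your proposal diverges from the paper and, as written, does not close.

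The step you flag as the main obstacle is in fact where you are missing the key idea. You propose to build a Shohat--Favard functional $L$ on $\C[\lambda]$ and dual ``second kind'' polynomials so as to extract each $u_j$, and you worry about decoupling the two half-axes $j\ge 0$ and $j<0$ because both contribute degree $\lfloor|j|/s\rfloor$. The paper does \emph{not} attempt any coefficient matching in $\lambda$ and builds no linear functional on $\C[\lambda]$. Instead it substitutes the operator $\bar A$ (the ``complex conjugate'' of $A$, acting on $l^2(\Z)$) for the scalar $\lambda$ in each polynomial $P_j^{(r)}$. The scalar recurrence is preserved under this substitution, and one then proves by induction the single operator identity
\[
\sum_{r=-s}^{s-1} P_n^{(r)}(\bar A)\,\sigma^{(r)}=\sigma^{(n)}\qquad(n\in\Z),
\]
where $\sigma^{(n)}$ is the $n$-th coordinate vector. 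Pairing the $2s$ identities $\sum_n u_n\overline{P_n^{(r)}(\bar A)}=0$ against $\sigma^{(r)}$ and summing over $r$ immediately gives $\sum_n u_n\sigma^{(n)}=0$, hence $\mathbf{u}=\mathbf{0}$. No decoupling of the two half-lines is needed; the bi-infinite issue simply does not arise once one works at the operator level.

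For general $X$ your Gelfand-theory reduction has a genuine gap. The commutative subalgebra $\mathcal A\subset\mathcal B(X)$ generated by the $A_{j,k}^*$ and the inverses $(A_{j,j\pm s}^*)^{-1}$ is only a commutative Banach algebra, not in general a $C^*$-algebra, so its Gelfand transform need not be injective (the radical can be nontrivial), and in any case characters of $\mathcal A$ are functionals on $\mathcal A$, not on $X$: it is unclear how ``evaluating at a character'' would recover $u_j\in X$. The paper avoids this entirely. It first uses the vanishing of $\phi_{\mathbf v}$ for all $\mathbf v$ to obtain, coefficientwise in $\lambda$, the vector identities $\sum_j (C_{j,m}^{(r)})^* u_j=0$ in $X$. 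It then defines $\mathcal P_j^{(r)}:X\to l^2(X)$ by $\mathcal P_j^{(r)}x=\sum_m \bar A^m i_r C_{j,m}^{(r)}x$ (this is the operator-substitution step, with $i_r:X\hookrightarrow l^2(X)$ the $r$-th slot embedding). The commutation hypothesis is used \emph{only} to move the coefficients $C_{j,m}^{(r)}$ past the entries of $\bar A^m$, which yields both the recurrence for $\mathcal P_j^{(r)}$ and the reproducing identity $\sum_r\mathcal P_n^{(r)}v=i_n v$. Combining this with the vector identities gives $\langle u_k,x\rangle=0$ for every $x\in X$ and every $k$. No spectral theory or functional calculus on $\mathcal A$ is invoked.
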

Our proof of the above proposition is inspired by a  well known construction, sometimes referred to as the Shohat-Favard theorem for complex Jacobi matrices. We refer the reader to the survey articles \cite{Bea, Bec} and references therein. 
\subsection{Dimension one}
To avoid extra technical details and explain the idea we first assume that $X=\C$ and write $A_{j,k}=a_{j,k}\in\C$

\begin{proof}[Proof of Proposition \ref{pr:ortog}, $X=\C$] Consider the   families of polynomials
 \[
 P_j^{(r)}(\lambda),\quad r=-s, -s+1, ..., 0, ...,  s-1,\ j\in\Z
 \] 
 defined by the relations 
 \begin{gather}
 P_j^{(r)}(\lambda)=\delta_{j,r}, \quad   j=-s, -s+1, ..., 0, ...,  s-1,  \notag \\
 \label{eq:induction}
  \lambda P_j^{(r)}(\lambda)=\sum_{k=j-s}^{j+s}\bar{a}_{k,j}P_k^{(r)}(\lambda).
  \end{gather}
For each $\lambda\in\C$ and $r=-s,...,s-1$ the vector $\mathbf{v}^{(r)}(\lambda=\{P_j^{(r)}(\lambda)\}_{j}$is a generalized eigenvector of $A^*$ with eigenvalue $\bar{\lambda}$.
Therefore 
\beq
\label{eq:scalar}
\sum_j u_j\overline{P_j^{(r)}(\lambda)}=0.
 \eeq

Let $\bar{A}:l^2(\C)\to l^2(\C)$ denote the "complex conjugate" of $A$:
\[
(\bar{A}\mbfu)_j=\sum_{k=j-s}^{j+s}\bar{a}_{j,k}u_k.
\]
 We consider $P^{(r)}_n(\bar{A}): l^2(\Z)\to l^2(\Z)$. 
The scalar relation \eqref{eq:induction} now yields
 \[
 \bar{A}P_j^{(r)}(\bar{A})=\sum_{k=j-s}^{j+s}\bar{a}_{k,j}P_k^{(r)}(\bar{A}).
 \]
 This in particular implies that 
\begin{equation}
\label{eq:evesta}
\|P^{(r)}_{n}(\bar{A})\|\leq C^{|n|} \ \mbox{for some} \ C>0. 
\end{equation}
similar to (\ref{eq:evest}).

We claim that \eqref{eq:scalar} implies 
\[\sum_{n} u_n\overline{P_n^{(r)}(\bar{A})}=0\]
and due to \eqref{eq:evesta} the series converges absolutely.

Let further ${\bf{\sigma}}^{(n)}$ be the $n$-th coordinate vector in $l^2(\Z)$. An induction argument shows that
 \[
 \sum_{r=-s}^{s-1} P^{(r)}_{n}(\bar{A})\mathbf{\sigma}^{(r)}=\mathbf{\sigma}^{(n)}.
 \]
 Then
\[
0=\sum_{r=-s}^{s-1}\sum_{n}u_n\overline{P_n^{(r)}}(\bar{A}){\mathbf{\sigma}}^{(r)}=\sum_nu_n\sigma^{(n)}.
\]
Hence $u\equiv 0$.
\end{proof}

\subsection{General case}
We extend  the above construction to banded operators on $l^2(X)$ with commuting entries. 

\begin{proof}[Proof of Proposition \ref{pr:ortog}, General case]
 We split the proof into several steps.
\subsubsection*{Step 1} We define families of operator-polynomials $\{P_j^{(r)}(\lambda)\}_j$, $-s\le r <s$, $\lambda\in\C$ by
\begin{gather}
P_r^{(r)}=I, \ P_j^{(r)}=0, \ j\neq r  \ \mbox{and} \  -s\le k<s  \notag  \\
\label{eq:polynomial}
\lambda P_j^{(r)}(\lambda)=\sum_{k=j-s}^{j+s}A_{k,j}^*P_k^{(r)}(\lambda).
\end{gather}

For any $x\in X$ the sequence $\mathbf{v}=\{v_j\}_j=\{P^{(r)}_j(\lambda)x\}_j$ is a generalized eigenvector of $A^*$, $A^*\mathbf{v}=\lambda \mathbf{v}$.

We have  $P_j^{(r)}(\lambda)=\sum_{m\ge 0} \lambda^m C_{j,m}^{(r)}$, where $C_{j,m}^{(r)}:X\to X$ and the sum is finite. Moreover, all coefficients $C_{j,m}^{(r)}$ are products of the operators  $A_{k,l}^*$ and their inverses (we will use this fact to interchange the order of operators).

Now the orthogonality relation $\mathbf{u}\perp \{P_j^{(r)}(\lambda)x\}_j$ implies
\[
0=\sum_j\langle u_j,P^{(r)}_j(\lambda)x\rangle_X=\sum_{m}\lambda^m \sum_j\langle u_j, C_{j,m}^{(r)}x\rangle_X,
\]
the series converges since we assume that $\|u_j\|_X$ decays fast in $j$. We conclude that each coefficient $\sum_j\langle u_j, C_{j,m}^{(r)}x\rangle_X$ vanishes. Then 
\begin{equation}
\label{eq:mz}
\sum_j (C_{j,m}^{(r)})^*u_j=0.
\end{equation}

\subsubsection*{Step 2}
Denote by $\bar{A}$ the "conjugate"operator 
\[
\bar{A}\mathbf{v}=\bar{A}\{v_j\}=\{(\bar{A}\mathbf{v})_j\},\quad 
(\bar{A}\mathbf{v})_j=\sum_{k=j-s}^{j+s} A^*_{j,k}v_k.
\]
By $i_m$ we denote the embedding  $X\hookrightarrow l^2(X)$ that places a given vector $x\in X$ into $m$-th position and zeros in all other positions:
\[
(i_m x)_k=\delta_{m,k}x.
\]
Define further
\begin{equation}\label{eq:mrec}
\cP_j^{(r)}u=\sum_{m\ge 0} \bar{A}^m i_rC_{j,m}^{(r)}u,\quad u\in X,\quad \cP_j^{(r)}:X\to l^2(X).
\end{equation}
Then (\ref{eq:mrec}), \eqref{eq:polynomial} and the commutation relation $A_{k,j}^*C_{k,m}^{(r)}=C_{k,m}^{(r)}A_{k,j}^*$ imply
\begin{equation*}
\bar{A}\cP_j^{(r)}u=\sum_{k=j-s}^{j+s}\cP_{k}^{(r)}A^*_{k,j}u.
\end{equation*}

We show by induction that for any $v\in X$
\begin{equation}\label{eq:mid}
\sum_{r=-s}^{s-1}\cP_n^{(r)}v=i_nv.
\end{equation}
Indeed, for $n=-s,..,s-1$ this follows from the definition of   $\cP_n^{(r)}$. Further by the recurrence formula
\[\cP_{n}^{(r)}A_{n,n-s}^*v=\bar{A}\cP_{n-s}^{(r)}(v)-\sum_{k=n-2s}^{n-1}\cP_{k}^{(r)}(A_{k,n-s}^*v)\]
Taking the sum with respect to $r$ and using the induction hypothesis, we obtain
\[\sum_{r=-s}^{s-1}\cP_n^{(r)}A_{n,n-s}^*v=\bar{A}i_{n-s}v-\sum_{k=n-2s}^{n-1}i_kA^*_{k,n-s}v=i_n(A^*_{n,n-s}v).\]
Now (\ref{eq:mid}) follows since $A_{n,n-s}^*$ is invertible.

\subsubsection*{Step 3} We denote by $\pi_k$ the $k$th projection of $l^2(X)$ to $X$, $\pi_k\mathbf{v}=v_k.$ Now we fix some $x\in X$ and for each $j\in\Z$ and $r=-s,...,s-1$ 
consider a sequence $\alpha^{(r, j)}=\{\alpha_k^{(r,j)}\}_k\in l^2(\C)$ defined by
\[\alpha^{(r,j)}_k=\langle u_j, \pi_k\cP_j^{(r)}x\rangle_X.\]
Let $\alpha^{(r)}=\sum_j \alpha^{(r, j)}\in l^2$, we have
\[\alpha^{(r)}_k=\sum_j\langle u_j, \pi_k\cP_j^{(r)} x\rangle_X=\sum_m\sum_j\langle u_j, \pi_k\bar{A}^mi_rC^{(r)}_{j,m}x\rangle_X.\]
The coefficients of operators $\bar{A}^m$ are operators from $X$ to $X$, they are products of operators $A_{l,k}^*$. Clearly, $\pi_k\bar{A}^mi_r$ is such a coefficient, it commutes with $C_{j,m}^{(r)}$. Therefore
\[\alpha_k^{(r)}=\sum_m\left\langle \sum_j (C_{j,m}^{(r)})^*u_j,\pi_k\bar{A}^mi_rx\right\rangle_X=0,\]
the last identity follows from (\ref{eq:mz}).

On the other hand, by (\ref{eq:mid})
\[
\sum_{r=-s}^{s-1}\alpha_k^{(r,j)}=\left\langle u_j, \pi_k \left (\sum_r \cP_j^{(r)}x\right )\right\rangle_X=\langle u_j, \pi_ki_j x\rangle_X=
\begin{cases} \langle u_j, x\rangle,\ k=j\\ 0,\ k\neq j
\end{cases}\]
Finally, $0=\sum_r\alpha_k^{(r)}=\sum_j\sum_r\alpha_k^{(r,j)}=\langle u_k,x\rangle$. Thus $u=0$.
\end{proof}
\subsection{Decay of stationary solutions} 
It was mentioned in \cite{FBV} that uniqueness results imply some estimates on the possible decay of stationary solutions of discrete Schr\"odinger operators. We suggest two elementary but reasonably sharp results.
\begin{proposition} Suppose that $A$ is a banded operator on $l^2(X)$ satisfying \eqref{eq:A} and \eqref{eq:matrix}. There exists a constant $c=c(A)$ such that  if a  solution $\mbfu\in l^2(A)$ of $A\mbfu=0$ satisfies
$\|\mbfu_j\|_X\le Ce^{-cj}$
then $u\equiv 0$.
\end{proposition}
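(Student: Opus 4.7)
The plan is to use a direct transfer-matrix argument, bypassing the entire-function and Shohat--Favard machinery of the previous sections. Write the equation $A\mbfu=0$ as the scalar recursion $\sum_{k=j-s}^{j+s}A_{j,k}u_k=0$ and group the components into blocks $\mathbf{U}_j=(u_{j-s},u_{j-s+1},\dots,u_{j+s-1})\in X^{2s}$. Solving the $j$-th equation for $u_{j+s}$ via $A_{j,j+s}^{-1}$ produces a first-order relation $\mathbf{U}_{j+1}=T_j\mathbf{U}_j$ with a bounded operator $T_j:X^{2s}\to X^{2s}$; solving the same equation for $u_{j-s}$ via $A_{j,j-s}^{-1}$ shows $T_j$ is invertible. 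The bounds in \eqref{eq:matrix} translate into uniform estimates $\|T_j\|,\|T_j^{-1}\|\le C_0$, with $C_0=C_0(a,\delta,s)$ depending only on $A$.

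Backward iteration then yields a lower bound on the decay of any nontrivial solution: from $\mathbf{U}_{j_0}=T_{j_0}^{-1}\cdots T_{j-1}^{-1}\mathbf{U}_j$ one gets $\|\mathbf{U}_j\|\ge C_0^{-(j-j_0)}\|\mathbf{U}_{j_0}\|$ whenever $j>j_0$. On the other hand the hypothesis $\|u_j\|_X\le Ce^{-cj}$ gives $\|\mathbf{U}_j\|\le \sqrt{2s}\,Ce^{-c(j-s)}$ for large $j$. Combining the two,
\[
\|\mathbf{U}_{j_0}\|\le \sqrt{2s}\,Ce^{cs}C_0^{-j_0}(C_0 e^{-c})^{j},
\]
and whenever $c>\log C_0$ the right-hand side tends to $0$ as $j\to\infty$, so $\mathbf{U}_{j_0}=0$. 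Since this holds for every $j_0$, we conclude $\mbfu\equiv 0$, with the explicit threshold $c(A)=\log C_0$.

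The only substantive step is the uniform bound on $\|T_j^{-1}\|$, which reduces to the coordinate-wise estimate $\|u_{j-s}\|_X\le \delta^{-1}a\sum_{k=j-s+1}^{j+s}\|u_k\|_X$ read directly off the recursion; in the product norm on $X^{2s}$ this yields $C_0$ bounded in terms of $a$, $\delta$, $s$ alone. Nothing in the argument requires the commutation assumption of Section~\ref{s:CJ} or the completeness of the generalized eigenvectors of $A^*$, so the conclusion holds for any Hilbert space $X$, consistent with the authors' description of the result as elementary but reasonably sharp.
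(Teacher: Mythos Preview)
Your proof is correct and takes essentially the same approach as the paper. The paper works directly with the sliding-window maxima $M_j=\max_{-s<m\le s}\|u_{j+m}\|_X$ and shows $M_j\ge (2s)^{-1}\delta a^{-1}M_{j-1}$ from the same coordinate-wise estimate $\|u_{j-s}\|_X\le \delta^{-1}a\sum_{m=-s+1}^{s}\|u_{n+m}\|_X$ that you isolate; your transfer-matrix formulation with blocks $\mathbf{U}_j\in X^{2s}$ and the bound on $\|T_j^{-1}\|$ is just a repackaging of this, yielding the same threshold $c(A)=\log(2s\,a\,\delta^{-1})$.
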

\begin{proof}
The recurrence formula implies
\[u_{n-s}(\lambda)=A_{n-s,n}^{-1}\left(\sum_{m=-s+1}^{s}A_{m+n,n}u_{m+n}\right).\]
Clearly, $\|u_{n-s}\|_X\le \delta^{-1}a\sum_{m=-s+1}^s\|u_{n+m}\|_X$. If $M_j=\max_{-s<m\le s}\|u_{j+m}\|_X$ then $M_{j}\ge (2s)^{-1}\delta a^{-1}M_{j-1}$. This actually implies that if \[
\|u(j)\|_X\le Cq^j,\quad q<\delta a^{-1}(2s)^{-1}\]
then $u\equiv 0$.
\end{proof}
We could formulate a bit more genera result, saying that 
\[\liminf_{j\to\infty} \frac{\ln M_j}{j}\ge c.\]
for any non-trivial solution of the stationary equation.

Similar approach can be applied to the case of the discreet Schr\"odinger operator with a bounded potential $V:\Z^d\to \C$, a straightforward computation shows that if  $u:\Z^d\to\C$ satisfies $\Delta_d u+Vu=0$ and
\begin{equation}\label{eq:eigen2}
\liminf_{N\to \infty}\frac{\ln(\max_{|n|_\infty\in\{N,N+1\}}|u(n)|)}{N}<-\|V\|_\infty-4d+1
\end{equation}
where $|n|_\infty=\max\{n_1,...n_d\}$ for $n\in\Z^d$, then $u\equiv 0$.
Indeed, the equation implies
\[
\max_{|n|_\infty=N-1} |u(n)|\le (4d-2+\|V\|_\infty)\max_{|n|_\infty=N}|u(n)|+\max_{|n|_\infty=N+1}|u(n)|,\]
and  (\ref{eq:eigen2}) follows.



\begin{thebibliography}{10}
\bibitem{AT} 
\textsc{I. Alvarez-Romero, G. Teschl}, 
{\em A Dynamic Uncertainty Principle for Jacobi Operators}, arXiv:1608.04244
\bibitem{Bea}
\textsc{A. Beardon,} {\em The theorems of Stieltjes and Favard}, Comp. Math. Funct. Theory, {\bf 11} (2011), 247--262.
\bibitem{Bec}
\textsc{B. Beckermann,} {\em Complex Jacobi matrices}, Journ. Comp. Appl. Math, {\bf 127} (2001), 17--65.
\bibitem{B}
\textsc{Yu. M. Berezanskii,}
{\em Expansions in Eigenfunctions of Selfadjoint Operators}, Translations of Mathematical Monographs, 17. American Mathematical Society, Providence, RI, 1968.
\bibitem{CEKPV} 
\textsc{M. Cowling, L. Escauriaza, C.\,E. Kenig, G. Ponce \& L. Vega,} 
{\em The Hardy uncertainty principle revisited.} 
Indiana Univ. Math. J. {\bf 59} (2010), 2007--2025. 


\bibitem{EKPV1} 
\textsc{L. Escauriaza, C.\,E. Kenig, G. Ponce \& L. Vega} 
{\em The sharp Hardy uncertainty principle for Schr\"odinger evolutions.} 
Duke Math. J. {\bf 155} (2010), 163--187. 

\bibitem{EKPV2} 
\textsc{L. Escauriaza, C.\,E. Kenig, G. Ponce \& L. Vega} 
{\em Uniqueness properties of solutions to Schr\"odinger equations.} 
Bull. of Amer. Math. Soc. {\bf 49} (2012), 415-422.

\bibitem{FB1}
\textsc{A. Fern\'andez-Bertolin},
{\em A Discrete 
Hardy's uncertainty principle and Discrete evolutions}, arXiv:1506.00119.

\bibitem{FB}
\textsc{A. Fern\'andez-Bertolin},
{\em Convexity properties of Discrete Schr\"odinger evolutions and 
Hardy's uncertainty principles}, arXiv:1506.03717.



\bibitem{FBV}  
\textsc{A. Fern\'andez-Bertolin, L. Vega,}
{\em Uniqueness properties for Discrete equations and Carleman estimates}, arXiv:1509.08545


\bibitem{JLMP} 
\textsc{Ph. Jaming, Yu. Lyubarskii, E. Malinnikova, K.-M. Perfekt,}
{\em Uniqueness for discrete Schr\"odinger evolutions}, arXiv:1505.05398, to appear in Rev. Mat. Iberoam.

\bibitem{GS}
\textsc{I. M. Gelfand, G. E. Shilov,}
{\em Generalized functions, Vol. 3:Theory of differential equations.} 
Translated from the Russian by Meinhard E. Mayer Academic Press, 1967.

\bibitem{GV} 
\textsc{I. M. Gelfand, N. Ya. Vilenkin,}
 {\em Generalized functions, vol. 4: Applications of harmonic analysis.} 
Translated by Amiel Feinstein Academic Press, 1964.
\bibitem{L} 
\textsc{B. Ya. Levin,} 
{\em Lectures on entire functions.}
Translations of Mathematical Monographs, AMS, 1996.
\end{thebibliography}
\end{document}